\documentclass[a4paper]{amsart}
%%%%%%%%%%%%%%%%%%%%%%%%%%%%%%%%%%%%%%%%%%%%%%%%%%%%%%%%%%%%%%%%%%%%%%%%%%%%%%%%%%%%%%%%%%%%%%%%%%%%
\usepackage{amsmath,amsthm,amssymb}
\usepackage{amscd}
\usepackage[dvipdfmx,hiresbb]{graphicx}
\usepackage{mathrsfs}
\usepackage[dvipdfmx, usenames]{color}
\usepackage{graphicx}

\usepackage{tikz}
\usetikzlibrary{positioning, intersections, calc, arrows.meta,math}
\usetikzlibrary{patterns}
\usetikzlibrary{decorations.pathreplacing}

\usepackage{multicol}
\usepackage[arrow,matrix]{xy}

\usepackage{multirow}
\usepackage[normalem]{ulem}
\useunder{\uline}{\ul}{}
\usepackage {diagbox}
\usepackage{ulem}
\usepackage{float}
\bibliographystyle{plain}

\newtheorem{thm}{Theorem}[section]
\newtheorem{lem}[thm]{Lemma}
\newtheorem{cor}[thm]{Corollary}
\newtheorem{prop}[thm]{Proposition}

\theoremstyle{definition}

\newtheorem{defn}[thm]{Definition}
\newtheorem{remark}[thm]{Remark}

\theoremstyle{remark}

\newcommand{\calG}{\mathcal{G}}

\newcommand{\calI}{\mathcal{I}}

\newcommand{\calN}{\mathcal{N}}

\newcommand{\bbR}{\mathbb{R}}

\newcommand{\bbZ}{\mathbb{Z}}

\newcommand{\frako}{\mathfrak{o}}

\newcommand{\ora}{\overrightarrow}

        % \renewcommand{\therem}{}
  % \renewcommand{\theack}{}

%\@addtoreset{equation}{section}
\numberwithin{equation}{subsection}
%%%%%%%%%%%%%%%%%%%%%%%%%%%%%%%%%%%%%%%%%%%%%%%%%%%%%%%%%%%%%%%%%%%%%%%%%%%%%%%%%%%%%%%%%%%%%%%%%%%%

\def\XXint#1#2#3{{\setbox0=\hbox{$#1{#2#3}{\int}$}
\vcenter{\hbox{$#2#3$}}\kern-.5\wd0}}

%%%%%%%%%%%%%%%%%%%%%%%%%%%%%%%%%%%%%%%%%%%%%%%%%%%%%%%%%%%%%%%%%%%%%%%%%%%%%%%%%%%%%%%%%%%%%%%%%%%%
%%%%%%%%%%%%%%%%%%%%%%%%%%%%%%%%%%%%%%%%%%%%%%%%%%%%%%%%%%%%%%%%%%%%%%%%%%%%%%%%%%%%%%%%%%%%%%%%%%%%
%\newcommand{\B}[2]{B_{#1}(#2)}

\usepackage[abbrev,nobysame]{amsrefs}
%\usepackage{showkeys}

%%%%%%%%%%%%%%%%%%%%%%%%%%%%%%%%%%%%%%%%%%%%%%%%%%%%%%%%%%%%%%%%%%%%%%%%%%%%%%%%%%%%%%%%%%%%%%%%%%%%
\makeatletter
\def\@makefnmark{%
\leavevmode
\raise.9ex\hbox{\check@mathfonts
\fontsize\sf@size\z@\normalfont%
\@thefnmark}%
}
\makeatother
%%%%%%%%%%%%%%%%%%%%%%%%%%%%%%%%%%%%%%%%%%%%%%%%%%%%%%%%%%%%%%%%%%%%%%%%%%%%%%%%%%%%%%%%%%%%%%%%%%%%

\title{Weighted sums of rooted spanning forests on cycles with pendant edges}
\author[H.~Fujita, K.~Hasegawa, Y.~Inaba, T.~Kondo]{Hajime Fujita, Kimiko Hasegawa,Yukie Inaba, Takefumi Kondo}
\address[H.~Fujita]{Japan Women's university}
\email{fujitah@fc.jwu.ac.jp}
\address[K.~Hasegawa]{Japan Women's university}
\email{m1716072hk@ug.jwu.ac.jp}
\address[Y.~Inaba]{JAL Information Technology Co., Ltd.}
\address[T.~Kondo]{Kagoshima university}
\email{takefumi@sci.kagoshima-u.ac.jp}
%\thanks{Partly supported by the Grant-in-Aid for JSPS Fellows, The Ministry of Education, Culture, Sports, Science and Technology, Japan}
\begin{document}
\maketitle
\begin{abstract}
We derive two formulas for the weighted sums of rooted spanning forests of particular sequence of graphs by using the matrix tree theorem. 
We consider cycle graphs with edges so called the pendant edges. 
One of our formula can be described as a variable transformation of the Chebyshev polynomial. 
They have particular algebraic properties. 
\end{abstract}

%\tableofcontents

\section{Introduction}
Counting problem of subgraphs with specific conditions is a fundamental problem in combinatorial graph theory. 
A typical example is the counting of {\it spanning forests}, and the solution is given by the famous {\it matrix tree theorem}. 
For a weighted graph the solution of the counting problem gives a polynomial as a generating function. 
In this paper we consider the counting problem of {\it rooted spanning forests} of a given graph with a fixed node set of the vertex set. 
In this setting the polynomial is called the {\it weighted sum} of rooted spanning forests. 

For a finite graph with boundary, a spanning forest in which every component tree contains some boundary points
is called a {\it grove}, and furthermore, a grove whose component tree contains exactly one boundary point 
is called an {\it uncrossing} \cite{Kenyon}. 
In this paper we treat a finite graph with boundary as a finite graph with a decomposition of the vertex set into nodes and internal vertices. 
In our terminology the uncrossing is nothing other than the rooted spanning forest.  
Enumeration of such objects appear in a combinatorial description of the response matrix, or Dirichlet-to-Neumann matrix
of networks, and played important roles in the study of the discrete EIT (Electrical Impedance Tomography) problem.

For any positive integer $n$ we compute the weighted sum of a cycle graph attaching specific $n$ edges so called {\it $n$ pendant edges} (e.g., see \cite{Barrientos}). We consider the weight with rotational invariance. 
As main theorems we derive formulas for the non-oriented version and the oriented version. 
The non-oriented version gives a polynomial obtained as a variable transformation of the Chebyshev polynomial.
The oriented version gives a polynomial described by the binomial coefficients. 
It turns out that both of them have factorization formulas described by minimal polynomials of $\zeta_n+\zeta^{-1}_n$ and $\zeta_n$ for $\zeta_n=e^{\frac{2\pi \sqrt{-1}}{n}}$, and they 
produce families of polynomials characterized by specific properties related with compositions and divisibilities. 
Several algebraic properties including the factorization formula and the characterization will be discussed in the forthcoming paper \cite{HasegawaSugiyama}. 

This paper is organized as follows. 
In Section~\ref{section:Preliminaries} we fix several notations and terminologies of graph theory. 
We also refer our main tool, the matrix tree theorem (Theorem~\ref{thm:Matrix-Tree}, Theorem~\ref{thm:Matrix-Tree2}). 
In Section~\ref{section:Main theorems} we give main theorems (Theorem~\ref{thm:main}, Theorem\ref{thm:main2}), computations of the weighted sum of the cycle graph with $n$ pendant edges. 
%We derive formulas for the non-oriented version and the oriented version. 
%The formula for the oriented version is a variable transformation of the Chebyshev polynomial. 
In Section~\ref{section:Corollaries} we exhibit the explicit computations, which posses particular algebraic properties. 
We also give corollaries on the concavity and unimodality of the coefficients of our weighted sums (Corollaries~\ref{cor:x(x+4)}, Corollaries~\ref{cor:logconc}). 
We give statements of algebraic properties of our weighted sums following \cite{HasegawaSugiyama} (Theorem~\ref{thm:Factorization}, Theorem~\ref{thm:Characterization}). 

\medskip

\noindent
{\bf Acknowledgement.} 
The authors are grateful to Rin Sugiyama for fruitful discussions on geometric and algebraic properties of our weighted sums. 
%%%%%%%%%%%%%%%%%%%%%%%%%%%%%%%%%%%%%%%%%%%%%%%%%%%%%%%%%%
\section{Preliminaries}\label{section:Preliminaries}

\subsection{Notations and terminologies}
In this article we consider finite simple graphs, i.e., graphs having finite vertex set and edge set without self loops or multiple edges. 
For a graph $\calG$ let $V(\calG)$ be the vertex set and $E(\calG)$ the edge set of $\calG$. 
We denote the edge connecting vertices $v$ and $v'$ by $vv'\in E(\calG)$.  
We may fix an arbitrarily ordering of $V(\calG)$ if necessarily.  
A {\it weight} of a given graph $\calG$ is a function 
\[
w:E(\calG)\to \bbR.  
\]The value $w(e)$ for an edge $e\in E(\calG)$ is called the {\it weight of $e$}. 
We often regard $w(e)$ as an indeterminant attached on $e\in E(\calG)$. 
A weighted graph is a pair $(\calG,w)$ consisting of a graph $\calG$ and its weight $w$.  
For any graph $\calG$ we fix a decomposition of $V(\calG)$ into two disjoint subsets $\calN(\calG)$ and $\calI(\calG)$, 
which are called the {\it set of nodes} and the {\it set of internal vertices} respectively.  

\begin{defn}
A {\it rooted spanning forest} of a graph $\calG$ is a subgraph $\calG'$ of $\calG$ which satisfies the following conditions. 
\begin{itemize}
\item $V(\calG')=V(\calG)$. 
\item $\calG'$ has no cycles. 
\item Each connected component of $\calG'$ has exactly one vertex in $\calN(\calG)$. 
\end{itemize}
We denote the set of all rooted spanning forests of $\calG$ by ${\rm RSF}(\calG)$. 
\end{defn}

\begin{defn}
Let $\calG=(\calG,w)$ be a weighted graph. 
The {\it weighted sum of rooted spanning forests of $\calG$} is a partition function defined by 
\[
Z(\calG)=\sum_{\calG'\in {\rm RSF}(\calG)}\prod_{e\in E(\calG')}w(e). 
\] 
\end{defn}

For a finite set $A$ let the symbol $|A|$ denotes the cardinality of $A$. 

\subsection{Laplacian matrix and matrix tree theorem}
In this subsection let $\calG=(\calG, w)$ be a weighted graph with a decomposition $V(\calG)=\calN(\calG)\sqcup\calI(\calG)$. 
\subsubsection{Non-oriented version}
\begin{defn}[The Laplacian matrix]
Let $L(\calG)=(L(\calG)_{vv'})$ be the matrix whose entries are indexed by $V\times V$ and defined as 
\[
L(\calG)_{vv'}=
\begin{cases}
-w(vv') \quad (v\neq v' \ {\rm and} \ vv'\in E(\calG)) \\ 
 \quad 0  \quad (v\neq v' \ {\rm and} \ vv'\notin E(\calG)) \\ 
\displaystyle\sum_{vv''\in E(\calG)}w(vv'') \quad (v=v'). 
\end{cases}
\]
\end{defn}

\begin{thm}[The matrix tree theorem,\cite{Wagner}]\label{thm:Matrix-Tree}
If the node set consists of a single vertex, then we have 
\[
Z(\calG)=(-1)^{i+j}\det(L_{(v_iv_j)})(\calG)), 
\]where $L_{(v_iv_j)}(\calG)$ is  the $(|V(\calG)|-1)\times(|V(\calG)|-1)$ matrix obtained by deleting a row vector  corresponding to $i$-th vertex $v_i$ and a column vector corresponding to $j$-th vertex $v_j$ from the Laplacian matrix $L(\calG)$ of $\calG$. 
\end{thm}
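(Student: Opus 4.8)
The plan is to observe that the hypothesis $|\calN(\calG)|=1$ forces every rooted spanning forest to be a single connected, acyclic subgraph spanning $V(\calG)$, i.e. an ordinary spanning tree containing the unique node; conversely every spanning tree is such a forest. Hence $Z(\calG)$ equals the weighted generating polynomial of spanning trees, and the statement is exactly the classical weighted matrix tree theorem. I would prove it by the signed-incidence-matrix method combined with the Cauchy--Binet formula. To set up, fix an arbitrary orientation of each edge and introduce the incidence matrix $B=(B_{ve})$ of size $|V(\calG)|\times|E(\calG)|$, with $B_{ve}=+1$ if $v$ is the head of $e$, $B_{ve}=-1$ if $v$ is the tail, and $B_{ve}=0$ otherwise, together with the diagonal weight matrix $W=\mathrm{diag}\bigl(w(e)\bigr)_{e\in E(\calG)}$. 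A direct entrywise check gives the factorization
\[
L(\calG)=B\,W\,B^{T},
\]
which is independent of the chosen orientation.

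Next I would reduce the general cofactor to a principal one. Since each row and each column of $L(\calG)$ sums to zero (the all-ones vector $\mathbf{1}$ satisfies $L(\calG)\mathbf{1}=0$ and $\mathbf{1}^{T}L(\calG)=0$), a short linear-algebra lemma shows that all signed cofactors $(-1)^{i+j}\det\bigl(L_{(v_iv_j)}(\calG)\bigr)$ coincide, independently of the pair $(i,j)$: concretely one adds the remaining rows, respectively columns, to a single row, respectively column, and compares the resulting minors. It therefore suffices to evaluate the principal cofactor $\det\bigl(L_{(v_iv_i)}(\calG)\bigr)$ for one fixed index $i$, which already carries the correct sign $(-1)^{2i}=1$.

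Writing $B_0$ for the matrix obtained from $B$ by deleting the row of $v_i$, the factorization restricts to $L_{(v_iv_i)}(\calG)=B_0\,W\,B_0^{T}$, and I would apply the Cauchy--Binet formula. Letting $B_0[S]$ denote the square submatrix of $B_0$ on the columns indexed by an edge set $S$, pulling the diagonal factors of $W$ out of the second factor yields
\[
\det\bigl(B_0\,W\,B_0^{T}\bigr)=\sum_{S}\det\bigl(B_0[S]\bigr)^{2}\prod_{e\in S}w(e),
\]
where the sum runs over all $S\subseteq E(\calG)$ with $|S|=|V(\calG)|-1$.

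The crux, and the step I expect to be the main obstacle, is the combinatorial evaluation of these minors: $\det\bigl(B_0[S]\bigr)\in\{0,\pm1\}$, and it is nonzero precisely when $S$ is the edge set of a spanning tree of $\calG$. I would establish this by showing that the columns of $B$ indexed by $S$ are linearly independent if and only if $S$ is acyclic, and that for a spanning tree the determinant is unimodular; the cleanest argument inducts on the number of vertices through a leaf, whose incidence row has a single $\pm1$ entry, so that Laplace expansion along it reduces the minor to that of the tree with the leaf deleted, while a cycle forces a linear dependence and hence a vanishing determinant. Substituting $\det\bigl(B_0[S]\bigr)^{2}=1$ for spanning trees and $0$ otherwise collapses the Cauchy--Binet sum to $\sum_{T}\prod_{e\in T}w(e)$ over spanning trees $T$, which is exactly $Z(\calG)$, completing the proof.
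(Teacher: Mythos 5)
Your proof is correct, but there is nothing in the paper to compare it against: the paper states Theorem~\ref{thm:Matrix-Tree} as a known result with a citation and no proof, and then uses it as a black box (notably in the proof of Lemma~\ref{lem:identifying nodes}). What you have written is the standard classical proof of the weighted matrix tree theorem, correctly assembled. Your opening reduction is exactly the right bridge to the classical statement: when $|\calN(\calG)|=1$, a rooted spanning forest cannot have a second component (it would contain no node), so ${\rm RSF}(\calG)$ is precisely the set of spanning trees and $Z(\calG)$ is the weighted spanning-tree generating polynomial. The subsequent steps --- the factorization $L(\calG)=B\,W\,B^{T}$, the equality of all signed cofactors of a matrix annihilated by $\mathbf{1}$ on both sides, the weighted Cauchy--Binet expansion, and the evaluation $\det\bigl(B_0[S]\bigr)\in\{0,\pm1\}$ with nonvanishing exactly on spanning trees via leaf induction --- are all sound; the cofactor-equality lemma is genuinely needed here, since the paper's statement allows an arbitrary pair $(i,j)$, and your row/column-addition argument yields it as a polynomial identity in the weights, so no connectivity or sign assumptions on $w$ are required. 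Two small points to make explicit in a final write-up: in the leaf induction, choose a leaf different from the deleted vertex $v_i$ (possible because a tree on at least two vertices has at least two leaves), and note that an acyclic $S$ with $|S|=|V(\calG)|-1$ is automatically a spanning tree, which is what collapses the Cauchy--Binet sum to exactly the spanning-tree terms.
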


\subsubsection{Oriented version}
We fix an orientation ${\mathfrak o}$ of the edge set $E(\calG)$ of $\calG$. 
Let $\calG^\frako=(V(\calG), E^\frako(\calG))$ be the associated oriented graph.  
We denote the oriented edge from $v$ to $v'$ by $\overrightarrow{vv'}\in E^\frako(\calG)$. 

\begin{defn}[The oriented Laplacian matrix]
Let $L^{\rm ori}(\calG^\frako)=(L^{\rm ori}(\calG^\frako)_{vv'})$ be the matrix whose entries are indexed by $V\times V$ and defined as 
\[
L^{\rm ori}(\calG^\frako)_{vv'}=
\begin{cases}
-w(\overrightarrow{vv'}) \quad (v\neq v'  \ {\rm and} \ \overrightarrow{vv'}\in E^\frako(\calG)) \\ 
\quad 0 \quad  (v\neq v'  \ {\rm and} \ \ora{vv'}\notin E^\frako(\calG)) \\ 
\displaystyle\sum_{\overrightarrow{v'' v} \in E(\mathcal{G}^\mathfrak{o})} w(\overrightarrow{v'' v}) \quad (v=v'). 
\end{cases}
\]
\end{defn}

%Now we introduce the {\it oriented rooted spanning forest. }

\begin{defn}[The oriented rooted spanning forest]
An {\it oriented rooted spanning forest} of the oriented graph $\calG^\frako$ is a rooted spanning forest of $\calG$ whose connected component has the outward orientation starting from the node. 
Let ${\rm RSF^{ori}}(\calG^\frako)$ be the set of all oriented rooted spanning forests of $\calG^\frako$. 
\end{defn}

\begin{defn}[The weighted sum of oriented rooted spanning forest]
The {\it weighted sum of oriented rooted spanning forests of $\calG^\frako$} is a partition function defined by 
\[
Z^{\rm ori}(\calG^\frako)=\sum_{\calG'\in {\rm RSF^{ori}}(\calG^\frako)}\prod_{\vec e\in E(\calG')}w(\vec e). 
\] 
\end{defn}

\begin{thm}[The oriented matrix tree theorem,\cite{Takasaki, Tutte}]\label{thm:Matrix-Tree2}
If the node set consists of a single vertex $\calN(\calG)=\{v_j\}$, then we have 
\[
Z^{\rm ori}(\calG^\frako)=(-1)^{i+j}\det(L^{\rm ori}_{(v_iv_j)}(\calG^\frako)), 
\]
where $L^{\rm ori}_{(v_iv_j)}(\calG^\frako)$ is  the $(|V(\calG)|-1)\times(|V(\calG)|-1)$ matrix obtained by deleting a row vector  corresponding to $i$-th vertex $v_i$ and the column vector corresponding to the node $v_j$ from the Laplacian matrix $L^{\rm ori}(\calG^\frako)$ of $\calG^\frako$. 
\end{thm}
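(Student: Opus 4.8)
The plan is to reduce the general statement to the principal minor $i=j$ by a cofactor-independence argument, and then to evaluate that minor by a sign-reversing involution that cancels every term containing a directed cycle. First I would record the structural feature of the oriented Laplacian that makes the sign $(-1)^{i+j}$ natural: every column of $L^{\rm ori}(\calG^\frako)$ sums to zero. Indeed, for a fixed vertex $v'$ the diagonal entry $\sum_{\ora{v''v'}\in E^\frako(\calG)}w(\ora{v''v'})$ is exactly the sum of the weights of the edges entering $v'$, while the remaining entries of that column are the terms $-w(\ora{vv'})$, one for each edge entering $v'$; the two cancel, so $\mathbf 1^{T}L^{\rm ori}(\calG^\frako)=0$. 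Consequently the rows of the matrix obtained by deleting the node column $v_j$ are linearly dependent with all coefficients equal to $1$, which, exactly as for ordinary graph Laplacians, forces the signed minors $(-1)^{i+j}\det(L^{\rm ori}_{(v_iv_j)}(\calG^\frako))$ to agree as $i$ varies. Hence it suffices to treat $i=j$ and to prove $\det(L^{\rm ori}_{(v_jv_j)}(\calG^\frako))=Z^{\rm ori}(\calG^\frako)$.

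Write $M=L^{\rm ori}_{(v_jv_j)}(\calG^\frako)$, a matrix indexed by $U:=V(\calG)\setminus\{v_j\}$, and expand $\det M=\sum_{\sigma\in\mathrm{Sym}(U)}\operatorname{sgn}(\sigma)\prod_{v\in U}M_{v\sigma(v)}$. Each diagonal factor $M_{vv}$ is a sum over the edges entering $v$, so after expanding the diagonals every term of $\det M$ is indexed by a permutation $\sigma$ together with a choice, for each fixed point $v$ of $\sigma$, of an edge entering $v$. I would then observe that every such term assigns to each $v\in U$ exactly one incoming edge $\ora{g(v)v}$ (for a fixed point this is the chosen edge, while for a vertex on a nontrivial $\sigma$-cycle it is the edge from its cyclic predecessor), so the term records a parent function $g\colon U\to V(\calG)$ and its monomial is $\pm\prod_{v\in U}w(\ora{g(v)v})$. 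A term in which $g$ has no directed cycle inside $U$ is precisely an oriented rooted spanning forest rooted at the node $v_j$, since every backward chain of parents then reaches $v_j$; for such a term $\sigma$ must be the identity, the sign is $+1$, and the monomial is exactly the weight of that forest. Thus these terms contribute $Z^{\rm ori}(\calG^\frako)$.

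It remains to show that all terms for which $g$ has at least one directed cycle inside $U$ cancel, and this sign-reversing involution is where the real work lies. Using the fixed ordering of $V(\calG)$, I would select among the $g$-cycles lying in $U$ the one, $C^{*}$, through the vertex of smallest index; because $g$ itself is never altered, this choice is canonical. The involution toggles the status of $C^{*}$: a cycle realized as a nontrivial cycle of $\sigma$ is rewritten with all of its vertices as fixed points carrying the incoming edges that run along $C^{*}$, and conversely. This leaves $g$, hence the underlying product $\prod_{v}w(\ora{g(v)v})$, unchanged, while a direct comparison of signs shows that a cycle of length $k$ carries the sign $(-1)^{k-1}$ and the monomial $\prod_{e\in C^{*}}(-w(\vec e))=(-1)^{k}\prod_{e\in C^{*}}w(\vec e)$, i.e.\ $-\prod_{e\in C^{*}}w(\vec e)$ in total, against $+\prod_{e\in C^{*}}w(\vec e)$ for the fixed-point realization. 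The two paired terms therefore differ only by sign and cancel, the map is a fixed-point-free involution on the cyclic terms, and only the forest terms survive; this gives $\det(L^{\rm ori}_{(v_jv_j)}(\calG^\frako))=Z^{\rm ori}(\calG^\frako)$ and, with the first paragraph, the stated formula for every $i$. The main obstacle is precisely the careful sign bookkeeping in this involution; an alternative route that sidesteps it is an induction by deletion--contraction on the edges entering a fixed non-node vertex, matching the recursion for $Z^{\rm ori}(\calG^\frako)$ against the Laplace expansion of $\det M$ along the corresponding row.
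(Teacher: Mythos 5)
The paper offers no proof of this statement to compare against: Theorem~\ref{thm:Matrix-Tree2} is quoted as a known result with citations to Takasaki and Tutte, and the authors only use it (via the oriented analogue of Lemma~\ref{lem:identifying nodes}) in the proof of Theorem~\ref{thm:main2}. Your blind proof is correct and complete, and it is essentially the classical argument for the directed matrix tree theorem from the cited literature. Both halves check out against the paper's conventions. First, with $L^{\rm ori}(\calG^\frako)_{vv'}=-w(\ora{vv'})$ off the diagonal and the weighted in-degree on the diagonal, every column indeed sums to zero, so after deleting the node column the rows sum to zero; the standard row-replacement computation then gives $\det(L^{\rm ori}_{(v_{i'}v_j)})=(-1)^{i'-i}\det(L^{\rm ori}_{(v_iv_j)})$, which is precisely the claimed independence of $(-1)^{i+j}\det(L^{\rm ori}_{(v_iv_j)})$ from $i$, reducing to the principal minor. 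Second, in the expansion of that minor your bookkeeping is right: a term is a parent function $g$ on $U=V(\calG)\setminus\{v_j\}$ together with the set of $g$-cycles realized as nontrivial cycles of $\sigma$, and each such cycle of length $k$ contributes $(-1)^{k-1}$ from $\mathrm{sgn}(\sigma)$ times $(-1)^{k}$ from the $k$ off-diagonal factors, a net $-1$; toggling the canonical cycle $C^{*}$ is well defined exactly because $g$ is preserved, so the involution is fixed-point-free and sign-reversing on all cyclic terms. The surviving acyclic terms are the parent functions whose backward chains terminate at $v_j$, i.e.\ spanning arborescences oriented away from the single node, each with $\sigma=\mathrm{id}$, sign $+1$, and the correct weight monomial, giving $Z^{\rm ori}(\calG^\frako)$. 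Two minor remarks: since $\frako$ orients each edge of a simple graph in one direction, nontrivial $2$-cycles cannot occur in $\calG^\frako_n$, but your argument does not use this and hence proves the theorem for arbitrary edge-weighted digraphs, which is the appropriate generality for the cited fact; and your closing alternative via deletion--contraction along the incoming edges of a fixed internal vertex is also a standard viable route, though the involution proof you carried out is the more self-contained of the two.
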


%%%%%%%%%%%%%%%%%%%%%%%%%%%%%%%%%%%%%%%%%%%%%%%%%%
\section{Main theorems : the weighted sum of cycles with $n$-pendant edges}\label{section:Main theorems}
Let $n$ be a positive integer and $C_n$ the cycle graph with $n$ vertices
%We assume that $C_n$ has the vertex set 
\[
V(C_n)=\{v_1, \ldots, v_n\}
\] and $n$ edges  
\[
E(C_n)=\{e_1=v_1v_2, e_2=v_2v_3, \ldots, e_{n-1}=v_{n-1}v_n, e_n=v_nv_1\}. 
\]
\begin{defn}
Let $V'_n=\{v_1',\ldots, v_n'\}$ be a copy of $V(C_n)$. 
We define the {\it cycle with $n$ pendant edges} $\calG_n$ by the following vertex set and the edge set. 
\[
V(\calG_n)=V(C_n)\cup V'_n, 
\]
\[
E(\calG_n)=E(C_n)\cup\{e_i' =v_iv_i'\ | \ i=1,\ldots, n\}. 
\]
We define the nodes of $\calG_n$  by $\calN(\calG_n)=V'_n$.  
In Figure~\ref{fig:1} and Figure~\ref{fig:2}  a node is depicted as a black circle and an internal vertex is depicted as white circle.

Unless otherwise stated we use a weight $w$ of the cycle with $n$ pendant edges of the form 
\[
w(e_i')=a, \quad w(e_i)=b \quad (i=1,\ldots, n)
\]for two indeterminant $a$ and $b$. 
\end{defn}

\begin{figure}[H]
\includegraphics[scale=0.8]{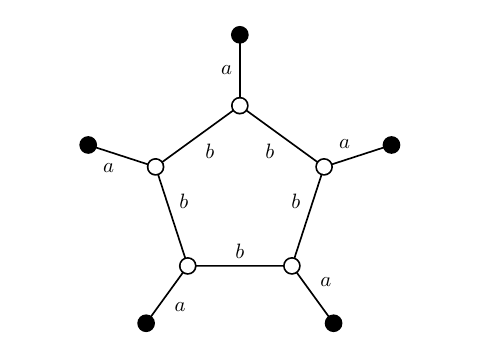}
%\caption{Configurations of roots of $F_n(x)$} \label{fig:fig}
\caption{$\calG_5$}\label{fig:1}
\end{figure}

%\vspace{1cm}

\begin{figure}[H]
\includegraphics[scale=0.8]{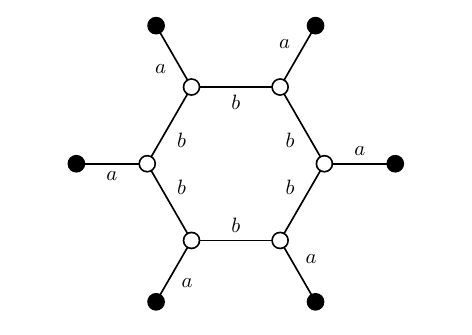}
%\caption{Configurations of roots of $F_n(x)$} \label{fig:fig}
\caption{$\calG_6$}\label{fig:2}
\end{figure}

\begin{remark}
A vertex of degree 1 is often called a {\it pendant vertex},   
and the cycle with $n$-pendant edges is also called the {\it sun let graph with $n$ pendant vertices}. 
See \cite{Barrientos} or \cite{Vernold} for example. 
\end{remark}

\subsection{The non-oriented case}
In our setting the weighted sum of $\calG_n$ is a two variable polynomial in $a$ and $b$ : 
\[
F_n(a,b):=Z(\calG_n). 
\]

It can be seen that $F_n(a,b)$ is a homogeneous polynomial of degree $n$ with $F_n(0,b)=0$. 
Our main result is the explicit description of $F_n(a,b)$ using the {\it Chebyshev polynomial of the first kind}. 
\begin{defn}
For each positive integer $n$ the {\it Chebyshev polynomial of the first kind} is a polynomial $T_n(x)$ satisfying the condition   
\[
T_n(\cos\theta)=\cos(n\theta)
\] for all $\theta\in\bbR$. 
\end{defn}

\begin{prop}\label{prop:characterization of T_n}
The Chebyshev polynomial $T_n(x)$ exists uniquely and can be computed by the recursive formula 
\[
\begin{cases}
T_1(x)=x \\ 
T_2(x)=2x^2-1 \\ 
T_{n+2}(x)=2xT_{n+1}(x)-T_n(x) \quad (n=1,2,\ldots,). 
\end{cases}
\]
\end{prop}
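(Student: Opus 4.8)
The plan is to prove existence and uniqueness separately, then verify the recursion.

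For existence, I would use the standard trigonometric identity for $\cos(n\theta)$. The key observation is that $\cos(n\theta)$ can be expressed as a polynomial in $\cos\theta$; this follows by induction from the product-to-sum formula $\cos((n+2)\theta)+\cos(n\theta)=2\cos\theta\cos((n+1)\theta)$. Setting $x=\cos\theta$ and defining $T_n$ by the stated recursion, one checks by induction that $T_n(\cos\theta)=\cos(n\theta)$ holds: the base cases $T_1(\cos\theta)=\cos\theta$ and $T_2(\cos\theta)=2\cos^2\theta-1=\cos(2\theta)$ are the familiar identities, and the inductive step is exactly the product-to-sum formula rewritten. This simultaneously establishes existence and shows that any polynomial satisfying the defining condition must obey the recursion.

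For uniqueness, suppose $T_n$ and $\widetilde{T}_n$ both satisfy $T_n(\cos\theta)=\cos(n\theta)=\widetilde{T}_n(\cos\theta)$ for all $\theta\in\bbR$. Then the polynomial $T_n-\widetilde{T}_n$ vanishes at every point of the form $\cos\theta$. Since $\cos\theta$ ranges over the entire interval $[-1,1]$ as $\theta$ varies over $\bbR$, the difference $T_n-\widetilde{T}_n$ has infinitely many roots, hence is the zero polynomial. This forces $T_n=\widetilde{T}_n$, giving uniqueness.

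The recursion formula is then obtained for free from the existence argument: since the polynomials defined by the recursion satisfy the defining trigonometric condition, and since uniqueness guarantees these are \emph{the} Chebyshev polynomials, the recursion is valid. The only mild subtlety to be careful about is making the logical order clean, namely first defining candidate polynomials $T_n$ via the recursion, then verifying these candidates satisfy $T_n(\cos\theta)=\cos(n\theta)$, and finally invoking the infinitude of values in $[-1,1]$ for uniqueness. There is no substantial obstacle here; the argument is entirely elementary, resting only on the product-to-sum identity and the fact that a nonzero polynomial has finitely many roots.
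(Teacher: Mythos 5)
Your proof is correct and complete: the recursion plus the product-to-sum identity $\cos((n+2)\theta)+\cos(n\theta)=2\cos\theta\cos((n+1)\theta)$ gives existence, and the fact that two polynomials agreeing on all of $[-1,1]$ must coincide gives uniqueness. The paper itself states Proposition~\ref{prop:characterization of T_n} without proof, treating it as classical, and your argument is exactly the standard one that would be supplied, so there is nothing to flag.
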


The following is the main theorem. 

\begin{thm}\label{thm:main}
$
F_n(a,b)=2b^n\left(T_n\left(\frac{a}{2b}+1\right)-1\right)
$
\end{thm}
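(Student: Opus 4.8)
The plan is to combine the matrix-tree theorem with the diagonalization of a circulant matrix, and then to match the resulting product against an explicit factorization of $T_n(x)-1$.

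First I would apply the matrix-tree theorem in its principal-minor (multi-root) form, deleting the rows and columns indexed by the entire node set $\calN(\calG_n)=V'_n$, so that
\[
F_n(a,b)=Z(\calG_n)=\det M,
\]
where $M$ is the $n\times n$ principal submatrix of the Laplacian $L(\calG_n)$ supported on the internal vertices $v_1,\dots,v_n$. Reading off the Laplacian entries, each $v_i$ carries the diagonal weight $a+2b$ (one pendant edge of weight $a$ and two cycle edges of weight $b$), while the only surviving off-diagonal entries are $-b$ between cyclically adjacent $v_i$ and $v_{i\pm1}$; the pendant contributions $-a$ are deleted together with the node rows and columns. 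Hence $M=(a+2b)I_n-bA_n$, where $A_n$ is the adjacency matrix of the cycle $C_n$.

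Since $A_n$ is circulant, so is $M$, and it is diagonalized by the discrete Fourier matrix. The eigenvalues of $A_n$ are $2\cos\tfrac{2\pi k}{n}$ for $k=0,1,\dots,n-1$, so those of $M$ are $a+2b-2b\cos\tfrac{2\pi k}{n}$, giving
\[
F_n(a,b)=\det M=\prod_{k=0}^{n-1}\Bigl(a+2b-2b\cos\tfrac{2\pi k}{n}\Bigr).
\]
Setting $x=\tfrac{a}{2b}+1$ (valid for $b\neq0$, with the final identity then extended to all $(a,b)$ by polynomiality), each factor becomes $2b\bigl(x-\cos\tfrac{2\pi k}{n}\bigr)$, so that $F_n(a,b)=(2b)^n\prod_{k=0}^{n-1}\bigl(x-\cos\tfrac{2\pi k}{n}\bigr)$.

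The remaining and principal step is the Chebyshev identity
\[
T_n(x)-1=2^{n-1}\prod_{k=0}^{n-1}\Bigl(x-\cos\tfrac{2\pi k}{n}\Bigr),
\]
which I would prove by comparing degrees, leading coefficients, and roots with multiplicity. Both sides have degree $n$, and the leading coefficient of $T_n$ is $2^{n-1}$, matching the right-hand side. Writing $x=\cos\theta$, the equation $T_n(x)=1$ reads $\cos(n\theta)=1$, i.e.\ $\theta\in\tfrac{2\pi}{n}\bbZ$, producing exactly the values $x=\cos\tfrac{2\pi k}{n}$. The delicate point is the multiplicities: the interior values $\cos\tfrac{2\pi k}{n}$ with $0<k<n/2$ occur twice in the product (from the pair $k$ and $n-k$), and they are correspondingly double roots of $T_n-1$ because $T_n'(x)=n\,U_{n-1}(x)$, with $U_{n-1}(\cos\theta)=\sin(n\theta)/\sin\theta$, vanishes there; the endpoints $x=1$ (always) and $x=-1$ (when $n$ is even) are simple roots and appear once. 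Matching these multiplicities confirms the identity, and substituting it back yields $F_n(a,b)=(2b)^n\cdot 2^{-(n-1)}\bigl(T_n(x)-1\bigr)=2b^n\bigl(T_n(\tfrac{a}{2b}+1)-1\bigr)$. The main obstacle is precisely this multiplicity bookkeeping in the Chebyshev factorization together with the boundary cases $k=0$ and $k=n/2$; the only other subtlety, the substitution through the locus $b=0$, is handled by noting that both sides are polynomials in $a,b$ agreeing on the Zariski-dense set $b\neq0$.
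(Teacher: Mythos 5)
Your proof is correct and takes essentially the same route as the paper: reduce $Z(\calG_n)$ via the principal-minor form of the matrix tree theorem (which the paper derives from the single-node version by identifying all nodes, Lemma~\ref{lem:identifying nodes}) to $\det\left(bL(C_n)+aE\right)$, diagonalize the circulant to get the product over $a+2b-2b\cos\frac{2\pi k}{n}$, and match it against $T_n(x)-1$ by degree, leading coefficient $2^{n-1}$, and roots. If anything you are more careful than the paper at the one delicate step, since your multiplicity bookkeeping via $T_n'(x)=nU_{n-1}(x)$ explicitly justifies the proportionality of the two degree-$n$ polynomials, where the paper's Lemma~\ref{lem:n-gon} argues only from \lq\lq same roots\rq\rq\ plus the leading coefficient without tracking multiplicities.
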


%%%%%%%%%%%%%%%%%%%%%%%%%%%%%%%%%%%%%%%%%%%%%%%%%%%%%%%%%

%\subsection{Proof of Theorem~\ref{thm:main}}
To show Theorem~\ref{thm:main} we introduce two lemmas below. 
%To show Lemma~\ref{lem:identifying nodes} we use the following famous {\it Matrix-Tree theorem}. 

\begin{lem}\label{lem:identifying nodes}
Let $\calG=(\calG, w)$ be a weighted graph with a decomposition $V(\calG)=\calN(\calG)\sqcup\calI(\calG)$. 
Let $L_\calI(\calG)$ be the $(|V(\calG)|-|\calN(\calG)|)\times(|V(\calG)|-|\calN(\calG)|)$ submatrix of the 
Laplacian matrix $L(\calG)$ corresponding to the internal vertices $\calI(\calG)$. 
We have 
\[
\det(L_\calI(\calG))=Z(\calG). 
\]
\end{lem}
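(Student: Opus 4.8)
The statement is the multi-node generalization of Theorem~\ref{thm:Matrix-Tree}, so the plan is to reduce it to the single-node case by contracting all nodes into one point. Let $\tilde\calG$ be the weighted graph obtained from $\calG$ by identifying every vertex of $\calN(\calG)$ to a single new vertex $*$, while keeping all internal vertices and all edges: an edge $vv'$ of $\calG$ becomes an edge of $\tilde\calG$ of the same weight, where an endpoint lying in $\calN(\calG)$ is replaced by $*$. We set $\calN(\tilde\calG)=\{*\}$ and $\calI(\tilde\calG)=\calI(\calG)$, so that $\tilde\calG$ has a single node and Theorem~\ref{thm:Matrix-Tree} applies to it. Choosing the deleted row and column both equal to the index of $*$, the sign $(-1)^{i+j}$ becomes $+1$ and $L_{(**)}(\tilde\calG)$ is precisely the principal submatrix on $\calI(\tilde\calG)$, i.e.\ $Z(\tilde\calG)=\det\!\big(L_\calI(\tilde\calG)\big)$.

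\textbf{The two things to check.} First I would verify the Laplacian identity $L_\calI(\tilde\calG)=L_\calI(\calG)$. For two distinct internal vertices $v,v'$ the entry is $-w(vv')$ or $0$ according to whether $vv'\in E$, and contraction touches no edge between internal vertices, so these entries coincide. For a diagonal entry the value is $\sum_{vv''\in E}w(vv'')$, and since the edges incident to an internal vertex $v$ are in weight-preserving bijection under contraction (only the node-endpoints are relabelled to $*$), the diagonal entries agree as well. Hence the internal block of the Laplacian is literally unchanged, giving $\det\!\big(L_\calI(\calG)\big)=\det\!\big(L_\calI(\tilde\calG)\big)$.

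\textbf{The main obstacle.} The substantive point is the weight-preserving bijection between ${\rm RSF}(\calG)$ and the spanning trees of $\tilde\calG$, which yields $Z(\calG)=Z(\tilde\calG)$. In the forward direction, given $\calG'\in{\rm RSF}(\calG)$ each component is a tree meeting $\calN(\calG)$ in exactly one vertex; identifying the nodes glues these trees along the single point $*$, producing a connected, acyclic spanning subgraph, i.e.\ a spanning tree of $\tilde\calG$. For the inverse I would split $*$ back apart: in a spanning tree $T$ of $\tilde\calG$ each component of $T-*$ is joined to $*$ by exactly one edge (otherwise a cycle through $*$ would appear), so it reattaches to a \emph{unique} original node; components reattaching to the same node merge into one tree containing exactly that node, and nodes untouched by $T$ become isolated one-node components. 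The care here is exactly this ``one node per component'' accounting, together with the fact that an original node may be adjacent to several internal vertices (so several edges of $T$ at $*$ may share an endpoint), and that the contraction may create parallel edges at $*$; one should note that the single-node matrix tree theorem and the argument above are insensitive to this, since the relevant internal block and the edge-weight bookkeeping are unaffected. Combining the three steps gives $\det\!\big(L_\calI(\calG)\big)=\det\!\big(L_\calI(\tilde\calG)\big)=Z(\tilde\calG)=Z(\calG)$.
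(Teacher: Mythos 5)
Your proposal is correct and follows essentially the same route as the paper: the paper's proof also contracts all nodes to a single vertex $v_0$, invokes the natural bijection between ${\rm RSF}(\calG)$ and ${\rm RSF}(\calG')$ to get $Z(\calG)=Z(\calG')$, observes that $L_\calI(\calG)$ sits as the block $L_{(v_0v_0)}(\calG')$, and applies Theorem~\ref{thm:Matrix-Tree}. You in fact verify more than the paper does (the unchanged internal Laplacian block, the inverse of the bijection, and the harmlessness of parallel edges created at $*$ --- to which one could add edges between two nodes becoming loops at $*$, equally harmless since they occur in no rooted spanning forest and do not touch the internal block).
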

\begin{proof}
Let $\calG'$ be the graph obtained from $\calG$ by identifying all nodes. 
Note that $\calG'$ has only one node $v_0$. One can see that there is a natural bijective between ${\rm RSF}(\calG)$ and ${\rm RSF}(\calG')$. 
In particular we have $Z(\calG)=Z(\calG')$. 
The Laplacian $L(\calG')$ is a $(|\calI(\calG)|+1)\times(|\calI(\calG)|+1)$ matrix of  the form 
\[
L(\calG')=
\begin{pmatrix}
* & * \\
* & L_\calI(\calG)
\end{pmatrix}. 
\]By taking $L_{(v_0v_0)}(\calG')=L_\calI(\calG)$ we have 
\[
Z(\calG)=Z(\calG')=\det(L_{(v_0v_0)}(\calG'))=\det(L_\calI(\calG))
\]by Theorem~\ref{thm:Matrix-Tree}. 
\end{proof}

\begin{remark}
Lemma~\ref{lem:identifying nodes} is refered in \cite[Theorem(Excesice)~6.1]{Wagner}. 
This technique \lq\lq identifying  all nodes\rq\rq \ in the proof of Lemma~\ref{lem:identifying nodes} is also used  in \cite[Lemma~4.3.1]{KenyonWilson1}. 
\end{remark}

\begin{lem}\label{lem:n-gon}
For a cycle with $n$ pendant edges $\calG_n$ we have 
\[
\det(L_\calI(\calG_n))=2b^n\left(T_n\left(\frac{a}{2b}+1\right)-1\right). 
\]
\end{lem}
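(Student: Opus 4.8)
The plan is to write down the submatrix $L_\calI(\calG_n)$ explicitly, recognize it as a circulant matrix, evaluate its determinant by diagonalization, and then match the resulting trigonometric product against the Chebyshev polynomial. First I would read off the entries of $L_\calI(\calG_n)$ from the definition of the Laplacian: each internal vertex $v_i$ is incident to the two cycle edges (weight $b$) and to the pendant edge $e_i'$ (weight $a$), so its diagonal entry is $a+2b$, while the only nonzero off-diagonal entries among internal vertices are the values $-b$ attached to the cyclically adjacent pairs $v_i,v_{i\pm 1}$; the deleted nodes $v_i'$ contribute nothing. Writing $S$ for the $n\times n$ cyclic shift matrix, this says
\[
L_\calI(\calG_n)=(a+2b)I-b\,(S+S^{-1})=b\,M,\qquad M=2xI-(S+S^{-1}),
\]
where $x=\tfrac{a}{2b}+1$, using $a+2b=2bx$. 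Hence $\det L_\calI(\calG_n)=b^{n}\det M$, and it remains to show $\det M=2\bigl(T_n(x)-1\bigr)$.

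Next I would diagonalize $M$. Since $S$ is the cyclic shift, its eigenvalues are the $n$-th roots of unity $\zeta^{k}$ with $\zeta=e^{2\pi\sqrt{-1}/n}$ and $k=0,\dots,n-1$, with common eigenvectors, so $S+S^{-1}$ has eigenvalues $\zeta^{k}+\zeta^{-k}=2\cos\frac{2\pi k}{n}$. Therefore
\[
\det M=\prod_{k=0}^{n-1}\Bigl(2x-2\cos\tfrac{2\pi k}{n}\Bigr)=2^{n}\prod_{k=0}^{n-1}\Bigl(x-\cos\tfrac{2\pi k}{n}\Bigr),
\]
so the whole computation reduces to the single polynomial identity
\[
2^{\,n-1}\prod_{k=0}^{n-1}\Bigl(x-\cos\tfrac{2\pi k}{n}\Bigr)=T_n(x)-1 .
\]

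Establishing this identity is the main obstacle, and I would prove it by comparing roots (with multiplicity) and leading coefficients. Both sides are polynomials of degree $n$. Setting $x=\cos\theta$ and using $T_n(\cos\theta)=\cos n\theta$, the right-hand side vanishes exactly when $\cos n\theta=1$, i.e.\ at $x=\cos\frac{2\pi k}{n}$; since $T_n(x)\le 1$ on $[-1,1]$, a local analysis (or the values $T_n'(1)=n^{2}$ and, for even $n$, $T_n'(-1)=-n^{2}$) shows that $x=1$ is a simple root, $x=-1$ is a simple root when $n$ is even, and every interior value $\cos\frac{2\pi k}{n}$ is a double root. The left-hand product has precisely the same zeros with the same multiplicities, because the factors indexed by $k$ and $n-k$ coincide, while $k=0$ (and $k=n/2$ for even $n$) contribute single factors; a short count checks that the multiplicities sum to $n$ in both parities. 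Finally, the recursion in Proposition~\ref{prop:characterization of T_n} shows that $T_n$ has leading coefficient $2^{\,n-1}$ (it doubles at each step from $T_1(x)=x$), which matches the normalized product, so the two polynomials agree.

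Combining the three displays then yields
\[
\det L_\calI(\calG_n)=b^{n}\cdot 2^{n}\cdot 2^{\,1-n}\bigl(T_n(x)-1\bigr)=2b^{n}\Bigl(T_n\bigl(\tfrac{a}{2b}+1\bigr)-1\Bigr),
\]
as claimed. As an alternative that sidesteps the multiplicity bookkeeping, one could instead prove $\det M=2\bigl(T_n(x)-1\bigr)$ directly by a cofactor expansion relating the cyclic tridiagonal determinant to the open tridiagonal determinants (which satisfy the Chebyshev recursion), at the cost of carefully treating the two corner entries coming from $S$ and $S^{-1}$.
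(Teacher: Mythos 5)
Your proposal is correct and follows essentially the same route as the paper: both identify $L_\calI(\calG_n)$ with $b$ times a circulant (the paper writes it as $bL(C_n)+aE$ and cites the known eigenvalues $2-2\cos\frac{2\pi k}{n}$ of the cycle Laplacian, which is your shift-matrix diagonalization), then match the resulting degree-$n$ polynomial against $T_n\bigl(\frac{x}{2}+1\bigr)-1$ by comparing roots and the leading coefficient $2^{n-1}$. Your explicit verification that the root multiplicities agree (simple roots at $\pm 1$, double roots at interior values $\cos\frac{2\pi k}{n}$) is a point the paper's proof passes over with the phrase ``have the same roots,'' so your write-up is, if anything, slightly more careful than the original.
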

\begin{proof}
We first see that 
\[
L_\calI(\calG_n)=
\begin{pmatrix}
 a+2b & -b & 0 & \cdots & 0 & -b \\
 -b &  a+2b & -b & \cdots & 0 & 0 \\
 0 & -b & a+2b & \cdots & 0 & 0 \\ 
 \vdots  & \vdots & \vdots & \vdots & \vdots & \vdots \\
 0 & 0 & 0 & \cdots & a+2b & -b \\ 
  -b & 0 & 0 & \cdots & -b & a+2b 
\end{pmatrix}
=bL(C_n)+aE, 
\]where $C_n$ is the cycle graph with $\calN(C_n)=\emptyset$ and equipped with the constant weight $w\equiv 1$. 
Then we have 
\[
\det(L_\calI(\calG_n))=\det(bL(C_n)+aE)=b^n\det\left(L(C_n)+\frac{a}{b}E\right). 
\]
As it is stated in \cite[Section~1.4.3]{BrouwerHaemers}, one can see that eigenvalues of $L(C_n)$ are 
\[
2-2\cos\left(\frac{2\pi k}{n}\right) \quad (k=0,1,\ldots, n-1). 
\] 
On the other hand the definition $T_n(x)=T_n(\cos\theta)=\cos(n\theta)$ shows that the roots of $T_n(x)-1=0$ are
\[
\cos\left(\frac{2\pi k}{n}\right) \quad (k=0,1,\ldots, n-1), 
\]and hence, two polynomials $\det\left(L(C_n)+xE\right)$ and $T_n\left(\frac{x}{2}+1\right)-1$ have the same roots. 
Since the coefficient of $x^n$ in $T_n(x)$ is $2^{n-1}$ one has 
\[
T_n\left(\frac{x}{2}+1\right)-1=2^{n-1}\left(\frac{x}{2}+1\right)^n+\cdots =
\frac{1}{2}x^n+\cdots. 
\]It implies that $\det\left(L(C_n)+xE\right)=2\left(T_n\left(\frac{x}{2}+1\right)-1\right)$, and hence, we have 
\[
\det(L_\calI(\calG_n))=b^n\det\left(\Delta(C_n)+\frac{a}{b}E\right)=
2b^n\left(T_n\left(\frac{a}{2b}+1\right)-1\right). 
\]
\end{proof}

\begin{proof}[Proof of Theorem~\ref{thm:main}]
By using Lemma~\ref{lem:identifying nodes} and Lemma~\ref{lem:n-gon} we have 
\[
F_n(a,b)=Z(\calG_n)=\det(L_\calI(\calG_n))=2b^n\left(T_n\left(\frac{a}{2b}+1\right)-1\right). 
\]
\end{proof}

\subsection{The oriented case}
For each $n$ we fix an orientation $\frako$ of $\calG_n$ as follows. 
We fix the counter clockwise orientation of edges in $C_n$ and the orientation of pendant edges in such a way that $\ora{v_i'v_i}\in E^\frako$.  See Figure~\ref{fig:3} and Figure~\ref{fig:4}

\begin{figure}[H]
\includegraphics[scale=0.8]{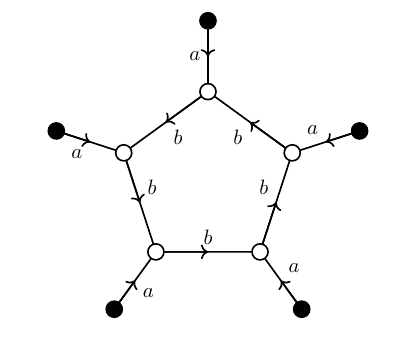}
%\caption{Configurations of roots of $F_n(x)$} \label{fig:fig}
\caption{$\tilde\calG_5$}\label{fig:3}
\end{figure}

\begin{figure}[H]
\includegraphics[scale=0.8]{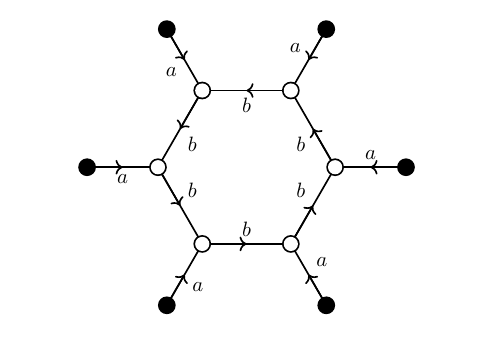}
%\caption{Configurations of roots of $F_n(x)$} \label{fig:fig}
\caption{$\tilde\calG_6$}\label{fig:4}
\end{figure}

Let $\tilde\calG_n$ be the weighted oriented graph $\tilde\calG_n=(\calG^\frako_n, w)$. 
We put  
\[
\tilde F_n(a,b):=Z^{\rm ori}(\tilde\calG_n). 
\]

\begin{thm}\label{thm:main2}
$\tilde F_n(a,b)=(a+b)^n-b^n$. 
\end{thm}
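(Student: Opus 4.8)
The plan is to mirror the structure of the non-oriented proof, replacing the combinatorial identity by a direct determinant computation. First I would invoke the oriented analogue of Lemma~\ref{lem:identifying nodes}: identifying all nodes of $\tilde\calG_n$ yields a graph with a single node $v_0$, and the bijection between oriented rooted spanning forests is still valid, so that $\tilde F_n(a,b)=Z^{\rm ori}(\tilde\calG_n)=\det(L^{\rm ori}_\calI(\tilde\calG_n))$, where $L^{\rm ori}_\calI$ denotes the submatrix of the oriented Laplacian indexed by the internal vertices $v_1,\ldots,v_n$. This reduces the theorem to an $n\times n$ determinant evaluation.

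Next I would write down this matrix explicitly. With the chosen orientation, the pendant edge $\ora{v_i'v_i}$ points into the internal vertex $v_i$, contributing $a$ to the diagonal indegree at $v_i$, and the counterclockwise cycle edge $\ora{v_iv_{i+1}}$ contributes $b$ to the indegree at $v_{i+1}$ and an off-diagonal entry $-w(\ora{v_iv_{i+1}})=-b$ in the row/column pair coupling $v_i$ and $v_{i+1}$. After restricting to internal vertices, I expect the diagonal entries to be $a+b$ (one unit of $b$ from the single incoming cycle edge, plus $a$ from the pendant), with $-b$ appearing in precisely one off-diagonal position per row, arranged cyclically. Concretely I anticipate a circulant-type matrix
\[
L^{\rm ori}_\calI(\tilde\calG_n)=
\begin{pmatrix}
a+b & 0 & \cdots & 0 & -b \\
-b & a+b & \cdots & 0 & 0 \\
0 & -b & \cdots & 0 & 0 \\
\vdots & \vdots & \ddots & \vdots & \vdots \\
0 & 0 & \cdots & -b & a+b
\end{pmatrix},
\]
having $a+b$ on the diagonal and a single cyclic band of $-b$'s; the key point is that, unlike the non-oriented case, only \emph{one} off-diagonal entry survives in each row because the orientation breaks the symmetry.

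To evaluate this determinant I would use its circulant structure. A circulant with first row determined by $a+b$ on the diagonal and $-b$ in one cyclically adjacent slot has determinant $\prod_{k=0}^{n-1}\bigl((a+b)-b\,\zeta_n^{k}\bigr)$, where $\zeta_n=e^{2\pi\sqrt{-1}/n}$; equivalently one can expand along the last column (cofactor expansion producing the diagonal product $(a+b)^n$ plus a single cyclic correction term $-(-b)^n$ up to sign) or factor the matrix as $(a+b)I-bP$ for the cyclic permutation matrix $P$ and use $\det((a+b)I-bP)=\prod_k(a+b-b\zeta_n^k)=(a+b)^n-b^n$, since $\prod_k(t-\zeta_n^k)=t^n-1$ evaluated appropriately. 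Any of these routes gives $\det(L^{\rm ori}_\calI(\tilde\calG_n))=(a+b)^n-b^n$, completing the proof.

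The main obstacle I anticipate is purely bookkeeping: getting the diagonal and the sign/position of the off-diagonal $-b$ correct from the orientation convention $\ora{v_i'v_i}\in E^\frako$ together with the counterclockwise cycle orientation, since the oriented Laplacian's diagonal uses \emph{indegree} weights $\sum_{\ora{v''v}}w(\ora{v''v})$ rather than the full degree. Once the matrix is pinned down as $(a+b)I-bP$, the determinant is immediate, so I expect no genuine difficulty beyond verifying that each internal vertex has exactly one incoming cycle edge (contributing $b$) and one incoming pendant edge (contributing $a$), which is forced by the regularity of the cycle.
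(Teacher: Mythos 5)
Your proposal is correct and takes essentially the same route as the paper: identify all nodes via the oriented analogue of Lemma~\ref{lem:identifying nodes}, restrict the oriented Laplacian to the internal vertices $v_1,\ldots,v_n$, and evaluate the resulting cyclic bidiagonal determinant --- the paper's ``direct computation,'' which your factorization $(a+b)I-bP$ with $\det\bigl((a+b)I-bP\bigr)=\prod_{k=0}^{n-1}\bigl(a+b-b\zeta_n^k\bigr)=(a+b)^n-b^n$ makes explicit. The only bookkeeping slip is that with the convention $L^{\rm ori}(\calG^\frako)_{vv'}=-w(\ora{vv'})$ the band of $-b$'s sits on the superdiagonal (row $v_i$, column $v_{i+1}$, with the corner entry at the bottom-left, as in the paper), so your displayed matrix is the transpose of the correct one --- which of course leaves the determinant, and hence the proof, unchanged.
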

\begin{proof}
Consider the oriented weighted graph $\tilde\calG_n'$ by identifying all nodes in $\tilde\calG_n$. 
We see that 
\[
L^{\rm ori}(\tilde\calG_n')
= \begin{pmatrix}
0 & -a & -a & -a & \cdots & -a\\
0 & a+b & -b & 0 & \cdots & 0 \\
0 & 0 & a+b & -b & \cdots & 0 \\
\cdots & \cdots & \cdots & \cdots & \cdots & \cdots \\
 0 & -b & 0 & \cdots & 0 & a+b
\end{pmatrix}, 
\]
and we use the oriented version of Lemma~\ref{lem:identifying nodes} so that we have
\[
\tilde F_n(a,b)=Z^{\rm ori}(\tilde\calG_n)=Z^{\rm ori}(\tilde\calG'_n)=\det(L_\calI(\tilde\calG_n)), 
\]where 
\[
L_\calI(\tilde\calG_n)=
\begin{pmatrix}
a+b & -b & 0 & \cdots & 0 \\
0 & a+b & -b & \cdots & 0 \\
\cdots & \cdots & \cdots & \cdots & \cdots \\
-b & 0 & \cdots & 0 & a+b 
\end{pmatrix}. 
\] By the direct computation we have 
\[
\tilde F_n(a,b)=\det(L_\calI(\tilde\calG_n))=(a+b)^n-b^n. 
\]
\end{proof}

\begin{remark}
Figure~\ref{fig:5} is an example of the oriented rooted spanning forests, and Figure~\ref{fig:6} is not.  
Theorem~\ref{thm:main2} can be obtained by a purely combinatorial argument without using the matrix tree theorem. 
 In fact one can count that the number of oriented rooted spanning forests with $k(=0, \ldots, n-1)$ internal edges is equal to the binomial coefficient $\begin{pmatrix} n \\ k \end{pmatrix}$, and hence, the formula  
 \[
\tilde F_n(a,b)=\sum_{k=0}^{n-1}\begin{pmatrix} n \\ k \end{pmatrix}a^{n-k}b^{k}=(a+b)^n-b^n
 \]holds. 
 \end{remark}

\begin{figure}[H]
\includegraphics[scale=0.8]{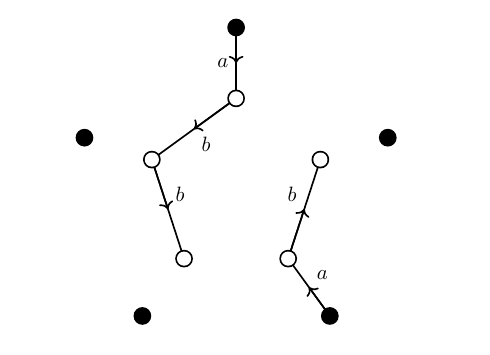}
%\caption{Configurations of roots of $F_n(x)$} \label{fig:fig}
\caption{an oriented rooted spanning forest}\label{fig:5}
\end{figure}

\begin{figure}[H]
\includegraphics[scale=0.8]{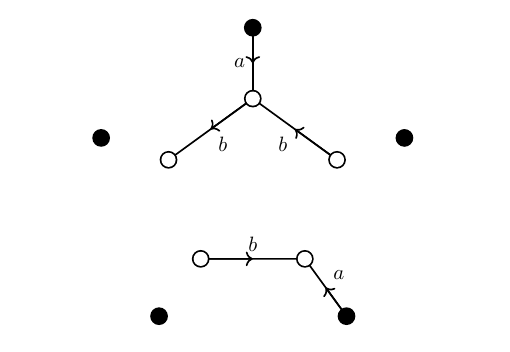}
%\caption{Configurations of roots of $F_n(x)$} \label{fig:fig}
\caption{not an oriented rooted spanning forest}\label{fig:6}
\end{figure}

\section{Corollaries and further discussions}\label{section:Corollaries}
In this section we specialize $F_n(a,b)$ as 
\[
F_n(x):=F_n(x,1)=2\left(T_n\left(\frac{x}{2}+1\right)-1\right). 
\]One can recover $F_n(a,b)$ from $F_n(x)$ by 
\[
F_n(a,b)=b^nF_n\left(\frac{a}{b}\right). 
\]
Theorem~\ref{thm:main} gives the following corollary. 
\begin{cor}\label{cor:x(x+4)}
The polynomial $c_n(x)$ defined by 
\[
c_n(x):=
\begin{cases}
x \quad (n : {\rm odd}) \\ 
x(x+4) \quad (n : {\rm even}). 
\end{cases}
\]
is a divisor of $F_n(x)$. 
\end{cor}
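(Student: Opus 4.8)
The plan is to prove divisibility by exhibiting the claimed factors as roots of $F_n(x)$, evaluating $F_n$ at $x=0$ and $x=-4$ directly from the defining property $T_n(\cos\theta)=\cos(n\theta)$ of the Chebyshev polynomial. Since $F_n(x)$ is a genuine polynomial (it is the partition function $Z(\calG_n)$ specialized at $b=1$, by Theorem~\ref{thm:main}), each linear factor corresponding to a root will divide $F_n(x)$, and coprimality of the two linear factors will let me conclude for the product in the even case.

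First I would handle the factor $x$, which occurs for every $n$. Evaluating at $x=0$ gives
\[
F_n(0)=2\left(T_n(1)-1\right)=2\left(T_n(\cos 0)-1\right)=2(\cos 0-1)=0,
\]
so $x\mid F_n(x)$ regardless of the parity of $n$. This already settles the odd case, where $c_n(x)=x$.

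Next I would treat the even case, where I must additionally show $(x+4)\mid F_n(x)$, i.e. that $x=-4$ is a root. Substituting $x=-4$ gives argument $\tfrac{-4}{2}+1=-1$, and using $-1=\cos\pi$ together with the defining identity,
\[
F_n(-4)=2\left(T_n(-1)-1\right)=2\left(\cos(n\pi)-1\right)=2\bigl((-1)^n-1\bigr).
\]
When $n$ is even this equals $0$, so $(x+4)\mid F_n(x)$; when $n$ is odd it equals $-4\neq 0$, confirming that $x+4$ is genuinely absent there and thereby justifying the parity split in the definition of $c_n(x)$. (Equivalently, one may read these two roots off the root list $x=2\bigl(\cos(2\pi k/n)-1\bigr)$, $k=0,\ldots,n-1$, established in the proof of Lemma~\ref{lem:n-gon}: the index $k=0$ always yields $x=0$, while $k=n/2$ exists precisely when $n$ is even and yields $x=-4$.)

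Finally, to assemble the even case I would note that the linear polynomials $x$ and $x+4$ have distinct roots $0$ and $-4$, hence are coprime; since each divides $F_n(x)$, their product $x(x+4)=c_n(x)$ divides $F_n(x)$ as well. I expect no serious obstacle here: the only point requiring a word of care is this last combination step, where one must invoke coprimality rather than simply multiplying the two divisibility statements.
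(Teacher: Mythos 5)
Your proof is correct and takes essentially the same route as the paper: evaluating $F_n$ at $x=0$ and $x=-4$ via Theorem~\ref{thm:main} and the identity $T_n(\cos\theta)=\cos(n\theta)$, which gives $T_n(1)=1$ for all $n$ and $T_n(-1)=(-1)^n$. Your explicit coprimality step for combining the factors $x$ and $x+4$ in the even case is a point of care the paper leaves implicit, but it does not change the argument.
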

\begin{proof}
We have 
\[
 F_n(0)=F_n(0,1)=0  
\]for all $n$, 
and if $n$ is even, then by using Theorem~\ref{thm:main} we have 
\[
F_n(-4)=2^n(T_n(-1)-1)=2^n(\cos(n\pi)-1)=0.  
\]
\end{proof}
We also introduce the polynomial 
\[
\tilde F_n(x)=\tilde F_n(x,1)=(x+1)^n-1. 
\]
One can see that $\tilde F_n(x)$ has two real roots $x=0, -2$ when $n$ is even and one real root $x=0$ when $n$ is odd. 
In contrast we have shown in the proof of Lemma~\ref{lem:n-gon} that all roots of $F_n(x)$ are real roots 
\[
\omega_k:=\zeta_n^k+\zeta_n^{-k}-2=2\left(\cos\frac{2\pi k}{n}-1\right) \ (k=0,1,\ldots, n-1). 
\]
 We have the following corollary by \cite[Lemma~1.1]{Brabden}. 
\begin{cor}\label{cor:logconc}
Let $\alpha_j \ (j=1,\ldots,n)$ be the coefficient of $x^j$ in $F_n(x)$. 
The sequence $\{\alpha_j\}_{j=1,\ldots, n}$ is log-concave, i.e., the inequality  
\[
\alpha_j^2\geq \alpha_{j-1}\alpha_{j+1}
\]holds for all $j=2,\ldots, n-1$. In particular they are unimodal, i.e., there exists $j'\in \{1,\ldots,n\}$ such that 
\[
\alpha_1\leq\cdots\leq \alpha_{j'-1}\leq \alpha_{j'}\geq \alpha_{j'+1}\geq \cdots \geq \alpha_n
\]holds.  
\end{cor}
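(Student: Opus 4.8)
The plan is to exhibit $F_n(x)$ as a real-rooted polynomial with nonnegative coefficients and then quote \cite[Lemma~1.1]{Brabden}, which asserts that such a polynomial automatically has a log-concave (and hence unimodal) coefficient sequence. Thus almost all of the work lies in verifying the two hypotheses of that lemma.

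First I would recall from the proof of Lemma~\ref{lem:n-gon} that $F_n(x)$ is monic of degree $n$ and that its zeros are exactly $\omega_k=2(\cos\frac{2\pi k}{n}-1)$ for $k=0,1,\ldots,n-1$; in particular all zeros are real, which is the essential input. Since $\omega_0=0$ and $\omega_k<0$ for $1\leq k\leq n-1$, I can write
\[
F_n(x)=x\prod_{k=1}^{n-1}(x-\omega_k),
\]
where each factor $x-\omega_k=x+|\omega_k|$ has strictly positive constant term. A product of polynomials with nonnegative coefficients has nonnegative coefficients, so $\alpha_0=0$ while $\alpha_1,\ldots,\alpha_n$ are all strictly positive. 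This simultaneously supplies nonnegativity and shows that the sequence under consideration, $\{\alpha_j\}_{j=1}^n$, has no internal zeros.

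With both hypotheses in hand I would invoke \cite[Lemma~1.1]{Brabden} to conclude the log-concavity $\alpha_j^2\geq\alpha_{j-1}\alpha_{j+1}$ for $j=2,\ldots,n-1$. For the unimodality statement I would then use that every $\alpha_j$ with $j\geq 1$ is positive: dividing the log-concavity inequality by $\alpha_{j-1}\alpha_j$ yields $\alpha_{j+1}/\alpha_j\leq\alpha_j/\alpha_{j-1}$, so the consecutive ratios are non-increasing; letting $j'$ be the last index at which this ratio is at least $1$ gives the desired peak $\alpha_1\leq\cdots\leq\alpha_{j'}\geq\cdots\geq\alpha_n$.

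The only genuinely delicate point is the index bookkeeping forced by the vanishing coefficient $\alpha_0$. Because $x=0$ is a simple zero of $F_n$, the relevant sequence is the tail $\alpha_1,\ldots,\alpha_n$, which coincides with the strictly positive coefficient sequence of $G(x):=\prod_{k=1}^{n-1}(x-\omega_k)$; applying the lemma to $G$ rather than to $F_n$ removes the spurious zero and makes both the log-concavity range $j=2,\ldots,n-1$ and the unimodality conclusion match the statement exactly. I expect this shift, rather than any analytic difficulty, to be the main thing to get right.
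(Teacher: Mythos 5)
Your proposal is correct and takes essentially the same route as the paper: the paper likewise deduces the corollary from the real-rootedness of $F_n(x)$, with roots $\omega_k=2\left(\cos\frac{2\pi k}{n}-1\right)$ established in the proof of Lemma~\ref{lem:n-gon}, combined with \cite[Lemma~1.1]{Brabden}. Your extra bookkeeping---factoring out the simple zero at $x=0$ and applying the lemma to $F_n(x)/x$ so that the sequence $\alpha_1,\ldots,\alpha_n$ is strictly positive with no internal zeros---is a detail the paper leaves implicit, and you handle it correctly.
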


\begin{remark}
The roots $\omega_k$ of $F_n(x)=0$ are the real parts of $2(\zeta_n^k-1)$ for $k=0,\ldots, n-1$, which can be illustrated as shown in Figure~\ref{fig:fig} together with the circle and its inscribed $n$-gon. 

\begin{figure}
\includegraphics[scale=0.5]{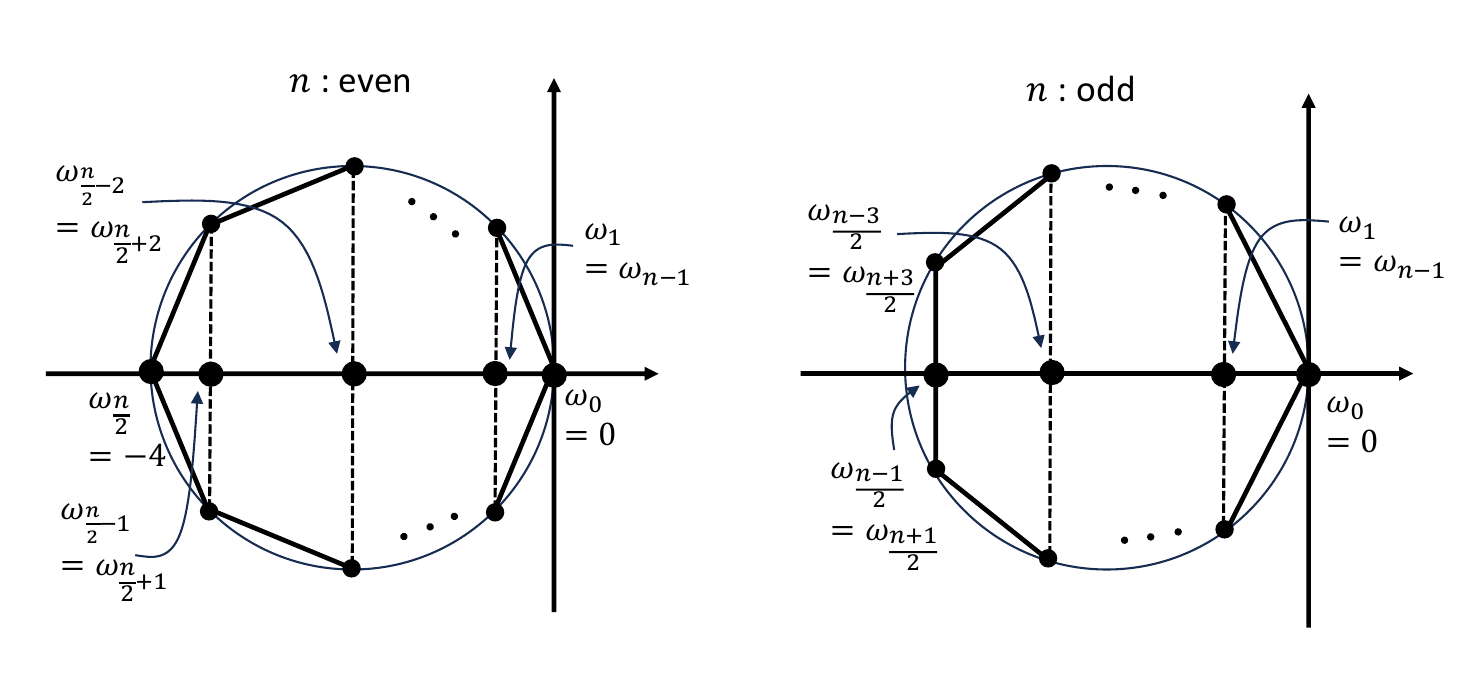}
\caption{Configurations of roots of $F_n(x)$} \label{fig:fig}
\end{figure}
\end{remark}

We have the following explicit computations, and these results suggest more detailed factorization of $F_n(x)$. 
\begin{enumerate}
\item $F_1(x)=x$
\item $F_2(x)=x^2+4x=x(x+4)$
\item $F_3(x)=x^3 + 6x^2 + 9x = x(x + 3)^2$
\item $F_4(x)=x^4 + 8x^3 + 20x^2 + 16x = x(x + 2)^2(x + 4)$
\item $F_5(x)=x^5 + 10x^4 + 35x^3 + 50x^2 + 25x= x(x^2 + 5x + 5)^2$
\item $F_6(x)=x^6 + 12x^5 + 54x^4 + 112x^3 + 105x^2 + 36x= x(x + 1)^2(x + 3)^2(x + 4)$
\item $F_7(x)=x^7 + 14x^6 + 77x^5 + 210x^4 + 294x^3 + 196x^2 + 49x \\ \hspace{0.85cm}= x(x^3 + 7x^2 + 14x + 7)^2$
\item $F_8(x)=x^8 + 16x^7 + 104x^6 + 352x^5 + 660x^4 + 672x^3 + 336x^2 + 64x \\ 
\hspace{0.85cm}= x(x + 2)^2(x + 4)(x^2 + 4x + 2)^2$
\item $F_9(x)=x^9 + 18x^8 + 135x^7 + 546x^6 + 1287x^5 + 1782x^4 + 1386x^3 + 540x^2 + 81x \\
\hspace{0.85cm} = x(x + 3)^2(x^3 + 6x^2 + 9x + 3)^2$
\item $F_{10}(x)=x^{10} + 20x^9 + 170x^8 + 800x^7 + 2275x^6 + 4004x^5 \\ 
\hspace{5cm}+4290x^4 + 2640x^3 + 825x^2 + 100x \\
 \hspace{0.95cm} = x(x + 4)(x^2 + 3x + 1)^2(x^2 + 5x + 5)^2$
\item $F_{11}(x)=x^{11} + 22x^{10} + 209x^9 + 1122x^8 + 3740x^7 + 8008x^6 + 11011x^5 \\ 
\hspace{5cm} +9438x^4 + 4719x^3 + 1210x^2 + 121x \\ 
\hspace{1cm}= x(x^5 + 11x^4 + 44x^3 + 77x^2 + 55x + 11)^2$ 
\item $F_{12}(x)=x^{12} + 24x^{11} + 252x^{10} + 1520x^9 + 5814x^8 + 14688x^7 + 24752x^6 \\ 
\hspace{4.5cm }+27456x^5 + 19305x^4 + 8008x^3 + 1716x^2 + 144x \\ 
\hspace{1cm}= x(x + 1)^2(x + 2)^2(x + 3)^2(x + 4)(x^2 + 4x + 1)^2$
\end{enumerate}

In fact the factorizations of $\tilde F_n(x)$ and $F_n(x)$ in $\bbZ[x]$ into irreducible polynomials are shown in \cite{HasegawaSugiyama}. 
To explain divisors of $\tilde F_n(x)$ and $F_n(x)$ we introduce  auxiliary polynomials as follows. 
Let $\Phi_n(x)$ be the $n$-th cyclotomic polynomial, i.e., the minimal polynomial of $\zeta_n=e^{\frac{2\pi\sqrt{-1}}{n}}$. 
Let $\Psi_n(x)$ be the minimal polynomial of $\zeta_n+\zeta_n^{-1}=2\cos\frac{2\pi }{n}$. 

\begin{thm}[\cite{HasegawaSugiyama}]\label{thm:Factorization}
We have the following factorizations of $F_n(x)$ and $\tilde F_n(x)$ in $\bbZ[x]$. 
\[
\tilde F_n(x)=\prod_{k|n}\Phi_k(x+1), \quad 
F_n(x)=c_n(x)\prod_{k|n, k>2}\left(\Psi_k(x+2)\right)^2. 
\]
\end{thm}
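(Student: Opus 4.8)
The first identity is essentially a renormalization of the classical cyclotomic factorization. By Theorem~\ref{thm:main2} we have $\tilde F_n(x)=(x+1)^n-1$, and substituting $X=x+1$ into the standard identity $X^n-1=\prod_{k\mid n}\Phi_k(X)$ yields
\[
\tilde F_n(x)=\prod_{k\mid n}\Phi_k(x+1)
\]
at once. No further work is needed for this half.

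For $F_n(x)$ the plan is to read off the factorization directly from the root data established in Lemma~\ref{lem:n-gon}. There we saw that $F_n$ is monic of degree $n$ and that its roots are
\[
\omega_k=\zeta_n^k+\zeta_n^{-k}-2=2\Bigl(\cos\tfrac{2\pi k}{n}-1\Bigr)\qquad(k=0,1,\ldots,n-1).
\]
First I would pin down the multiplicities. Since $\cos$ is even, $\omega_k=\omega_{n-k}$, so every value $\omega_k$ with $0<k<n/2$ occurs (at least) twice, whereas $\omega_0=0$ occurs once and, when $n$ is even, $\omega_{n/2}=-4$ occurs once. A degree count then forces
\[
F_n(x)=c_n(x)\prod_{0<k<n/2}\bigl(x-\omega_k\bigr)^2,
\]
with $c_n(x)$ as in Corollary~\ref{cor:x(x+4)} collecting the simple roots $x=0$ (and $x=-4$ when $n$ is even).

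The heart of the argument is to recognize the double-root factors as shifted minimal polynomials. For $0<k<n/2$ set $d=\gcd(k,n)$ and $m=n/d$; then $\zeta_n^k$ is a primitive $m$-th root of unity, and $m>2$ because $0<k<n/2$. Consequently
\[
\omega_k+2=\zeta_n^k+\zeta_n^{-k}=2\cos\tfrac{2\pi (k/d)}{m}
\]
is a Galois conjugate of $\zeta_m+\zeta_m^{-1}=2\cos\frac{2\pi}{m}$, hence a root of $\Psi_m$, so $\omega_k$ is a root of $\Psi_m(x+2)$. Writing $k=(n/m)j$ with $\gcd(j,m)=1$ and $0<j<m/2$, one sees that as $k$ runs over the indices in the product with that fixed value of $m$, the numbers $\omega_k$ run through exactly the $\phi(m)/2$ distinct conjugates of $2\cos\frac{2\pi}{m}$, shifted by $-2$. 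Since $\Psi_m$ is monic of degree $\phi(m)/2$ with simple roots (it is the minimal polynomial of a generator of the maximal real subfield $\bbQ(\zeta_m+\zeta_m^{-1})$ of $\bbQ(\zeta_m)$), the factor $\Psi_m(x+2)$ is precisely $\prod_k(x-\omega_k)$ over that block. Assembling the blocks over all $m\mid n$ with $m>2$ gives
\[
F_n(x)=c_n(x)\prod_{k\mid n,\,k>2}\bigl(\Psi_k(x+2)\bigr)^2.
\]

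Two bookkeeping points remain to make this rigorous. I would verify the degree balance using $\sum_{m\mid n}\phi(m)=n$: the right-hand side has degree $\deg c_n+\sum_{m\mid n,\,m>2}\phi(m)$, which equals $n$ whether $n$ is odd (then $\deg c_n=1$ and $2\nmid n$) or even (then $\deg c_n=2$ and we subtract $\phi(1)+\phi(2)=2$). I would also note that distinct values of $m$ contribute coprime factors, since the $\Psi_m$ are distinct monic irreducibles, so no root-collision occurs across blocks. The only genuinely non-elementary ingredient — and the step I expect to be the main obstacle — is the degree-and-simplicity statement $[\bbQ(2\cos\frac{2\pi}{m}):\bbQ]=\phi(m)/2$ for $m\ge 3$; granting this standard fact about real cyclotomic fields, the factorization reduces to a careful partition of the root multiset by multiplicative order.
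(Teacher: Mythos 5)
Your proof is correct, but there is nothing in the paper to compare it against: Theorem~\ref{thm:Factorization} is stated as a quotation from the forthcoming reference \cite{HasegawaSugiyama} and the paper gives no proof of it. What you have supplied is a complete, self-contained derivation from data the paper does establish: Theorem~\ref{thm:main2} together with the classical identity $X^n-1=\prod_{k\mid n}\Phi_k(X)$ settles $\tilde F_n$ immediately, and for $F_n$ you correctly combine monicity (leading coefficient $2\cdot 2^{n-1}\cdot(1/2)^n=1$) and the root list $\omega_k=2\bigl(\cos\frac{2\pi k}{n}-1\bigr)$ from Lemma~\ref{lem:n-gon} with the pairing $k\leftrightarrow n-k$ and the partition of indices by the multiplicative order $m=n/\gcd(k,n)$ of $\zeta_n^k$; your check that $m>2$ on the range $0<k<n/2$ is exactly the point that makes every $\Psi_m$ with $m\mid n$, $m>2$ appear, and your degree bookkeeping via $\sum_{m\mid n}\phi(m)=n$ closes the count. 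The two inputs you flag are indeed the only nontrivial ones and both are standard: $[\bbQ(\zeta_m+\zeta_m^{-1}):\bbQ]=\phi(m)/2$ for $m\ge 3$ (since $\zeta_m$ is non-real and satisfies $z^2-(\zeta_m+\zeta_m^{-1})z+1=0$ over the real subfield), and simplicity of the roots of $\Psi_m$, which is automatic because minimal polynomials over $\bbQ$ are separable. One small streamlining: the identity $F_n(x)=c_n(x)\prod_{0<k<n/2}(x-\omega_k)^2$ follows directly from $F_n(x)=\prod_{k=0}^{n-1}(x-\omega_k)$ and the pairing, with no multiplicity discussion needed; where injectivity genuinely matters is within each block, where strict monotonicity of cosine on $[0,\pi]$ guarantees the $\phi(m)/2$ numbers $\omega_{(n/m)j}$, $\gcd(j,m)=1$, $0<j<m/2$, are distinct and hence exhaust the simple roots of $\Psi_m(x+2)$ — you implicitly use this when asserting the block product equals $\Psi_m(x+2)$, so it is worth stating.
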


The above explicit computations include several divisibility, such as
\[
F_3(x) | F_{12}(x), \ F_4(x) | F_8(x), \ F_5(x) | F_{10}(x), \cdots . 
\]
By using Theorem~\ref{thm:Factorization}, the following algebraic characterization of $F_n(x)$ and $\tilde F_n(x)$ 
including the divisibilities is shown.
\begin{thm}[\cite{HasegawaSugiyama}]\label{thm:Characterization}
Suppose that a sequence of polynomials $\{P_n(x)\}_{n=1,2,\ldots}$ of integer coefficients satisfies the following conditions for all $n,m=1,2,\ldots$.
\begin{enumerate}
\item $\deg(P_n(x))=n$ 
\item $P_n(x)$ is monic. 
\item $P_n(P_m(x))=P_{nm}(x)=P_m(P_n(x))$
\item $P_n(x) | P_m(x) \Longleftrightarrow n | m$ 
\end{enumerate} 
Then we have $P_n(x)=F_n(x)$ or $\tilde F_n(x)$ for all $n=1,2\ldots$. 
\end{thm}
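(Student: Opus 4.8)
The plan is to reduce conditions (1)--(3) to the classical classification of commuting polynomials, and then to use the remaining normalizations (monicity and condition (4)) to pin down the one affine change of variable that is still free.

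First I would record the consequences of the hypotheses in low degree. Since $P_1$ is monic of degree $1$, write $P_1(x)=x+c$; condition (3) with $n=m=1$ gives $P_1\circ P_1=P_1$, i.e. $x+2c=x+c$, so $c=0$ and $P_1(x)=x$. Because $1\mid n$ for every $n$, the direction $n\mid m\Rightarrow P_n\mid P_m$ of condition (4) gives $P_1(x)=x\mid P_n(x)$, hence $P_n(0)=0$ for all $n$. Finally, condition (3) shows the family commutes under composition, $P_n\circ P_m=P_{nm}=P_{mn}=P_m\circ P_n$, so $\{P_n\}$ is a chain of pairwise commuting polynomials with $\deg P_n=n$.

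The core of the argument is the classical theorem of Ritt on commuting polynomials, and its chain version due to Block and Thielman: two polynomials of degree $\ge 2$ that commute are, after conjugation by a common affine map, simultaneously powers $x^k$, or simultaneously Chebyshev polynomials $T_k$, or common iterates of a single polynomial. Applying this to $f=P_2$ and $g=P_3$, the common-iterate alternative would force $\deg P_2=d^{a}$ and $\deg P_3=d^{b}$ for one integer $d\ge 2$, which is impossible since $2$ and $3$ are not powers of a common integer $\ge 2$. Hence there is an affine $\lambda(x)=\alpha x+\beta$ placing the conjugated chain in the power family or the Chebyshev family. Conjugating the whole chain by $\lambda$ and using that every $P_n$ commutes with $P_2$ (because $P_{2n}=P_2\circ P_n=P_n\circ P_2$) propagates the conclusion to all $n$: the centralizer of $x^2$ among polynomials is exactly the powers $x^k$, and the centralizer of $T_2$ is exactly the $T_k$ (note $-T_k$ fails to commute with $T_2$), so $\lambda P_n\lambda^{-1}=x^n$ for all $n$, or $\lambda P_n\lambda^{-1}=T_n$ for all $n$.

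It then remains to identify $\lambda$. In the power case $P_n(x)=\lambda^{-1}\bigl((\alpha x+\beta)^n\bigr)=\alpha^{-1}\bigl((\alpha x+\beta)^n-\beta\bigr)$; monicity (condition (2)) forces $\alpha^{n-1}=1$ for all $n$, hence $\alpha=1$, and then $P_n(0)=\beta^n-\beta=0$ for all $n$ gives $\beta\in\{0,1\}$. The value $\beta=0$ yields $P_n(x)=x^n$, which violates condition (4) (for instance $P_2=x^2\mid x^3=P_3$ while $2\nmid 3$), so $\beta=1$ and $P_n(x)=(x+1)^n-1=\tilde F_n(x)$. In the Chebyshev case $P_n(x)=\alpha^{-1}\bigl(T_n(\alpha x+\beta)-\beta\bigr)$; since the leading coefficient of $T_n$ is $2^{n-1}$, monicity forces $(2\alpha)^{n-1}=1$, hence $\alpha=1/2$, and $P_n(0)=0$ becomes $T_n(\beta)=\beta$ for all $n$. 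Imposing this for $n=2,3$ (i.e. $2\beta^2-1=\beta$ and $4\beta^3-3\beta=\beta$) leaves only $\beta=1$, so $P_n(x)=2\bigl(T_n(\tfrac{x}{2}+1)-1\bigr)=F_n(x)$ by Theorem~\ref{thm:main}, giving the two alternatives. The main obstacle is the classification step: the bookkeeping before and after it is elementary, but reducing the rigid hypotheses to ``powers or Chebyshev'' is precisely Ritt's theorem, and some care is needed both to exclude the common-iterate branch and to propagate the conjugacy from the pair $(P_2,P_3)$ to the whole chain. It is worth stressing that conditions (1)--(3) alone do not suffice, since the pure power chain $x^n$ satisfies them; condition (4) is what forces $P_n(0)=0$ and then eliminates that degenerate family.
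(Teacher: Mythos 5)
Your argument is essentially correct, but there is nothing in this paper to compare it against: Theorem~\ref{thm:Characterization} is stated here without proof, quoted from the forthcoming paper \cite{HasegawaSugiyama}, with only the remark that it is shown ``by using Theorem~\ref{thm:Factorization}''. The route the paper points to is therefore arithmetic: derive the characterization from the explicit factorizations $\tilde F_n(x)=\prod_{k|n}\Phi_k(x+1)$ and $F_n(x)=c_n(x)\prod_{k|n,\,k>2}\left(\Psi_k(x+2)\right)^2$ into shifted cyclotomic-type minimal polynomials, which encode exactly the composition and divisibility conditions (3) and (4). Your route is genuinely different: you reduce (1)--(3) to the classical classification of commuting polynomials (Ritt for pairs, Block--Thielman for chains), correctly exclude the common-iterate branch using the pair $(P_2,P_3)$ since $2$ and $3$ are not powers of a common integer, propagate the conjugacy to all $n$ via centralizers of $x^2$ and $T_2$, and then your normalization is complete and correct: $P_1(x)=x$ and $P_1\mid P_n$ force $P_n(0)=0$; monicity forces $\alpha=1$ (power case) or $\alpha=1/2$ (Chebyshev case); the fixed-point conditions give $\beta\in\{0,1\}$ or, intersecting $\{1,-1/2\}$ with $\{0,\pm 1\}$, $\beta=1$; and condition (4) kills the pure power chain $x^n$ (indeed $x^2\mid x^3$ with $2\nmid 3$), which is also the right observation that (1)--(3) alone do not suffice. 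What your approach buys is conceptual transparency and brevity at the cost of importing heavy classical theorems as black boxes; the factorization approach of \cite{HasegawaSugiyama} is presumably more self-contained and simultaneously exhibits the divisibility structure, which this paper cares about. Two points in your write-up deserve explicit care rather than a parenthetical: first, Ritt's trichotomy places the conjugated pair among $\pm T_k$, not just $T_k$, so you should record that $-T_2$ is affinely conjugate to $T_2$ via $x\mapsto -x$ before invoking the centralizer of $T_2$; second, the centralizer facts themselves (only the monomials $x^k$ commute with $x^2$, only the $T_k$ commute with $T_2$) need either a citation (Bertram; Block--Thielman) or their short direct proofs, since they are the hinge on which the propagation from $(P_2,P_3)$ to the whole chain turns. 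With those two repairs your proof is valid.
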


More detailed algebraic properties will be shown in \cite{HasegawaSugiyama},  
and graph theoretical interpretation of the divisibility will be discussed in the subsequent papers. 

\bibliography{reference}

\end{document}